\newtheorem{thm}{Theorem}[section]
\newtheorem{lem}[thm]{Lemma}
\newtheorem{conj}[thm]{Conjecture}
\newcommand{\s}{\mathbf{s}}
\DeclareMathOperator{\des}{des}
\DeclareMathOperator{\Des}{Des}
\newcommand{\sep}{\preceq}
\newcommand{\floor}[1]{\ensuremath{\left\lfloor #1 \right\rfloor}} % floor
\begin{document}

\begin{center}
{\large \bf  On the Real-rootedness of the Descent Polynomials of $(n-2)$-Stack Sortable Permutations
%Br{\"a}nd{\'e}n's Real-rootedness Theorem on $(n-2)$-Stack Sortable Permutations
}
\end{center}

\begin{center}
Philip B. Zhang\\[6pt]

Center for Combinatorics, LPMC-TJKLC\\
Nankai University, Tianjin 300071, P. R. China\\[8pt]

Email: {\tt zhangbiaonk@163.com}
\end{center}

\begin{abstract}
B\'ona conjectured that the descent polynomials on $(n-2)$-stack sortable permutations have only real zeros.
Br{\"a}nd{\'e}n proved this conjecture by establishing a more general result.
In this paper, we give another proof of Br{\"a}nd{\'e}n's result by using the theory of $\s$-Eulerian polynomials recently developed by
Savage and Visontai.

  % keywords are optional
  \bigskip\noindent \textbf{Keywords:}  Descent polynomials, $t$-stack sortable permutations, real-rootedness, interlacing.
\end{abstract}

\section{Introduction}

Suppose that $w=w_1\cdots w_n$ is a permutation of a set of distinct numbers and
$w_i$ is the maximal number of $\{w_1,\ldots,w_n\}$.
The stack sorting operation $s$ on $w$ can be recursively defined as
$$s(w)=s(w_1\cdots w_{i-1})s(w_{i+1}\cdots w_n)w_i.$$
Let $\mathfrak{S}_n$ denote the set of permutations of $[n]=\{1,2,\ldots,n\}$.
We say that $\sigma\in \mathfrak{S}_n$ is $t$-stack sortable if $s^t(\sigma)$
is the identity permutation. For more information on $t$-stack sortable permutations,
see B\'ona \cite{Bona2002Symmetry}, Knuth \cite{Knuth1969art}, and West \cite{West1990Permutations}.

For $\sigma=(\sigma_1,\sigma_2, \ldots, \sigma_n) \in \mathfrak{S}_n$, let
$$\Des \sigma = \{i \in [n-1] : \sigma_i > \sigma_{i+1}\}$$
denote the set of descents of $\sigma$, and let $\des \sigma = |\Des \sigma|.$
The Eulerian polynomial $A_n(x)$ is usually defined as the descent generating function over $\mathfrak{S}_n$, namely,
\begin{align}\label{eq-eulerionpol}
A_n(x) = \sum_{\sigma\in\mathfrak{S}_n}x^{\des \, \sigma}.
\end{align}

Let $W_{t}(n,k)$ be the number of $t$-stack sortable permutations in $\mathfrak{S}_n$ with $k$ descents, and let
\begin{align*}
W_{n,t}(x)=\sum_{k=0}^{n-1}W_{t}(n,k)x^{k}
\end{align*}
be the descent polynomial over $t$-stack sortable permutations.
B\'ona \cite{Bona2002Symmetry} showed that for fixed $n$ and
$t$ the descent polynomial $W_{n,t}(x)$ is symmetric and unimodal,
and proposed the following conjecture.

\begin{conj}[\cite{Bona2002Symmetry}]
The descent polynomial $W_{n,t}(x)$ has only real zeros for any integer $1 \le t \le n-1$.
\end{conj}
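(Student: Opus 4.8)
The plan is to stratify the proof by the value of $t$, because at the two extremes the polynomial $W_{n,t}(x)$ is classical and the genuine difficulty is concentrated in the interior. At the top end $t=n-1$ every permutation of $[n]$ is $(n-1)$-stack sortable, so $W_{n,n-1}(x)=A_n(x)$ is exactly the Eulerian polynomial of \eqref{eq-eulerionpol}; its real-rootedness is the classical theorem of Frobenius, which one recovers from the recurrence $A_n(x)=(nx-x+1)A_{n-1}(x)+x(1-x)A_{n-1}'(x)$ by showing inductively that $A_{n-1}(x)$ interlaces $A_n(x)$. At the bottom end $t=1$ the $1$-stack sortable permutations are precisely the $231$-avoiding ones, whose distribution by $\des$ is given by the Narayana numbers, so $W_{n,1}(x)$ is (a shift of) a Narayana polynomial; real-rootedness follows from the standard three-term recurrence for Narayana polynomials together with the preservation of interlacing under the operations appearing in it.

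For the principal case $t=n-2$ I would use the theory of $\s$-Eulerian polynomials of Savage and Visontai. The first step is to obtain a combinatorial handle on $(n-2)$-stack sortable permutations: following B\'ona's characterization of the permutations that fail to be $(n-2)$-stack sortable, I would introduce a refinement of the descent statistic whose generating function can be identified with an $\s$-Eulerian polynomial for a suitable sequence $\s=(s_1,s_2,\ldots)$ of positive integers. The second step is to verify that this refined polynomial obeys the compatibility recurrence satisfied by $\s$-Eulerian polynomials, so that the theorem of Savage and Visontai applies and yields both real-rootedness of every member of the family and interlacing of consecutive members. The last step is to recover $W_{n,n-2}(x)$ as a nonnegative combination (or specialization) of this interlacing family and to invoke the fact that a nonnegative linear combination of real-rooted polynomials sharing a common interleaver is again real-rooted; this is the route that reproves Br\"and\'en's result.

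The main obstacle is the intermediate range $2\le t\le n-3$, where no analogue of the Eulerian identity, the Narayana recurrence, or B\'ona's $(n-2)$-characterization is available. Here I would attempt to derive a functional equation for the bivariate generating function $\sum_n W_{n,t}(x)z^n$ in the spirit of the kernel-method analysis used for $t=2$, and then try to read off a recurrence that fits an interlacing framework. The difficulty is twofold: the generating-function machinery for general $t$ has not been solved in closed form, and even granting such a recurrence it is unclear that it preserves real-rootedness, so closing this case would likely demand either a new statistic refining $\des$ or a genuinely new real-rootedness-preserving operator. I therefore expect the extremes and the case $t=n-2$ to go through cleanly by the methods above, with the interior range being the decisive hard part.
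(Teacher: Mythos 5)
There is a genuine gap, and it is unavoidable: the statement you are trying to prove is an open conjecture. The paper does not prove it either --- it proves only the case $t=n-2$ (via Theorem \ref{main-thm}, which generalizes Br\"and\'en's Theorem \ref{branthm}), and merely cites the literature for $t=1,2,n-1$. Your proposal honestly reaches the same frontier: you settle the boundary cases and concede that $2\le t\le n-3$ is out of reach, so as a proof of the conjecture it does not close. Two smaller inaccuracies: for $t=2$ the known proof is not a kernel-method analysis of a functional equation but Br\"and\'en's multiplier-sequence argument applied to the Jacquard--Schaeffer closed form $W_2(n,k)=\frac{(n+k)!(2n-k-1)!}{(k+1)!(n-k)!(2k+1)!(2n-2k-1)!}$; and your proposal omits $t=2$ from the cases you actually handle, even though it is one of the four settled values.

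Within the $t=n-2$ case, your sketch also takes a step that would likely fail. You propose to refine the descent statistic \emph{on the $(n-2)$-stack sortable permutations themselves} and realize the result as an $\s$-Eulerian polynomial for some tailored sequence $\s$; no such realization is known, and B\'ona's characterization does not obviously produce one. The workable route --- the one the paper takes --- sidesteps this entirely: it uses the algebraic identity $W_{n,n-2}(x)=A_n(x)-x\,A_{n-2}(x)$, stays with the classical sequence $\s=(1,2,\ldots)$ and its refined Eulerian polynomials $A_{n,i}(x)$ of \eqref{eq-refine-Eulerian}, expands $A_n(x)=\sum_{j=0}^{n-2}\bigl((n-j-1)x+j+1\bigr)A_{n-1,j}(x)$ as in \eqref{eq:S}, and then absorbs the term $k\,x\,A_{n-2}(x)$ into the two extreme summands using the endpoint identities $A_{n-1,0}(x)=A_{n-2}(x)$ and $A_{n-1,n-2}(x)=x\,A_{n-2}(x)$, splitting $k/2$ to each end. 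Real-rootedness then follows from the pairwise interlacing of $\{A_{n-1,j}(x)\}$ (Lemma \ref{lem:A_pairwise_interlacing}) combined with the Haglund--Ono--Wagner lemma (Theorem \ref{Haglund1999}); the determinant conditions $a_ib_{i+1}\ge b_ia_{i+1}$ reduce to $n+k\ge 0$ at the two ends, which is exactly where the hypothesis $k\ge -n$ enters. If you rewrite your $t=n-2$ section along these lines you recover Br\"and\'en's result cleanly; the interior range $2\le t\le n-3$ remains, as you say, the decisive open problem, and it should be stated as such rather than as a case awaiting the same treatment.
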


The above conjecture is true for $t=1,2,n-2$, or $n-1$, see Br\"and\'en \cite{Braenden2006linear} and references therein.
In fact, $W_{n,1}(x)$ are the Narayana polynomials and $W_{n,n-1}(x)$ are the Eulerian polynomials, both of which
are known to be real-rooted.
Based on a compact and simple form of $W_{2}(n,k)$ due to Jacquard and Schaeffer \cite{Jacquard1998bijective},
\begin{align*}
W_{2}(n,k)=\frac{(n+k)!(2n-k-1)!}{(k+1)!(n-k)!(2k+1)!(2n-2k-1)!},
\end{align*}
Br\"and\'en proved the real-rootedness of $W_{n,2}(x)$ by using the tool of multiplier sequences.
For $t=n-2$, it is easy to show that
\begin{align*}
W_{n,n-2}(x)=A_{n}(x)-x\, A_{n-2}(x).
\end{align*}
By using certain real-rootedness preserving linear operator, Br\"and\'en  proved the real-rootedness of $W_{n,n-2}(x)$.
Remarkably, Br\"and\'en \cite{Braenden2006linear} obtained the following result.

\begin{thm}[\cite{Braenden2006linear}]\label{branthm}
For any $n \ge 3$ and  $k\ge -2$,
the polynomial
\begin{align}\label{eq-main}
K_n(x)=A_{n}(x)+ k x\,A_{n-2}(x)
\end{align}
has only real zeros.
\end{thm}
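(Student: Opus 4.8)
The plan is to remove the apparent obstruction caused by the fact that $A_n(x)$ and $xA_{n-2}(x)$ do \emph{not} interlace (the zero of $xA_{n-2}(x)$ at $0$ lies to the right of every zero of $A_n(x)$), by first passing to the symmetric decomposition, and only afterwards invoking the interlacing technology behind the $\s$-Eulerian polynomials.

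First I would exploit the symmetry. Both $A_n(x)$ and $xA_{n-2}(x)$ are self-reciprocal of degree $n-1$: for $A_n$ this is classical, and the reciprocal $x^{n-1}(xA_{n-2})(1/x)=xA_{n-2}(x)$ because $A_{n-2}$ is self-reciprocal of degree $n-3$. Hence $K_n(x)$ is self-reciprocal of degree $n-1$, and substituting $t=x+x^{-1}$ yields
\[
K_n(x)=x^{(n-1)/2}\,\bigl(\widehat A_n(t)+k\,\widehat A_{n-2}(t)\bigr)
\]
when $n$ is odd, and the same with an extra factor $1+x$ when $n$ is even; here $\widehat A_m$ denotes the symmetric part of $A_m$, with $\deg\widehat A_m=\floor{(m-1)/2}$, and a short computation shows that the symmetric part of $xA_{n-2}$ is again $\widehat A_{n-2}$. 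Since $x\mapsto x+x^{-1}$ maps $\mathbb R\setminus\{0\}$ onto $(-\infty,-2]\cup[2,\infty)$, the polynomial $K_n(x)$ has only real zeros once I show that $\widehat A_n(t)+k\,\widehat A_{n-2}(t)$ has only real zeros, all of them at most $-2$.

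Next I would supply the interlacing input. The degrees of $\widehat A_n$ and $\widehat A_{n-2}$ differ by exactly one, so if $\widehat A_{n-2}\preceq\widehat A_n$ then the pencil $\widehat A_n+k\,\widehat A_{n-2}$ is real-rooted for \emph{every} real $k$. This interlacing I would obtain from the Savage--Visontai framework: their interlacing-preservation lemma makes the refined $\s$-Eulerian polynomials a mutually interlacing sequence, and I would transfer this to the symmetric parts $\widehat A_m$, either by extracting a recurrence for the $\widehat A_m$ from the $\s$-Eulerian recurrence, or by arguing interlacing directly on the $t$-side. The striking consequence is that real-rootedness of $K_n(x)$ costs nothing extra; the whole force of the bound $k\ge-2$ lies in the \emph{location} of the zeros. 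To pin it down, I would use that in a real-rooted pencil the zeros move monotonically in $k$: since $\widehat A_{n-2}(-2)>0$, the largest zero of $\widehat A_n+k\,\widehat A_{n-2}$ equals $-2$ exactly at $k^\ast=-\widehat A_n(-2)/\widehat A_{n-2}(-2)$ and is $\le-2$ for all $k\ge k^\ast$. Thus it suffices to prove $k^\ast\le-2$, that is, the sharp inequality $\widehat A_n(-2)\ge 2\,\widehat A_{n-2}(-2)$, with equality precisely at $n=3$, where $\widehat A_3(-2)=2=2\,\widehat A_1(-2)$, reflecting the identity $A_3(x)-2xA_1(x)=(1+x)^2$. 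This is exactly the source of the constant $-2$.

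I expect the main obstacle to be this last step rather than the real-rootedness itself: once the symmetric reduction is in place the interlacing hands us real zeros for every $k$ for free, so all the remaining work is concentrated in the sharp root-location estimate $\widehat A_n(-2)\ge 2\,\widehat A_{n-2}(-2)$ and in checking that it is the largest zero that governs the threshold. The quantities $\widehat A_m(-2)$ can be computed from the classical evaluations of the Eulerian polynomials at $x=-1$ (the Euler numbers), and the inequality amounts to a growth property of these numbers, anchored at the tight base case $n=3$. A secondary technical point is that the substitution $t=x+x^{-1}$ does not automatically preserve interlacing, so the transfer of the Savage--Visontai interlacing from the $\s$-Eulerian polynomials to the symmetric parts $\widehat A_m$ must be carried out with some care.
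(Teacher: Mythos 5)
Your reduction via the symmetric decomposition is sound as far as it goes: $K_n(x)$ is indeed self-reciprocal of degree $n-1$, its image under $t=x+x^{-1}$ is $\widehat A_n(t)+k\,\widehat A_{n-2}(t)$ (up to the factor $x^{\lfloor (n-1)/2\rfloor}$ and, for even $n$, an extra $1+x$), and real-rootedness of $K_n$ is equivalent to this pencil having all zeros real and $\le -2$. But the load-bearing step --- the interlacing $\widehat A_{n-2}\preceq \widehat A_n$ --- is never proved, and it does not follow in any visible way from the Savage--Visontai input you invoke. Their lemma (Lemma 2.4 of the paper) gives pairwise interlacing of the refined polynomials $A_{n,0},\dots,A_{n,n-1}$ for a \emph{fixed} $n$, whereas your claim compares the symmetric parts of $A_n$ and $A_{n-2}$ across two values of $n$; moreover the substitution $t=x+x^{-1}$ scrambles root order (it folds $(-\infty,-1]$ and $[-1,0)$ onto the same ray $(-\infty,-2]$), so interlacing of preimages neither implies nor is implied by interlacing of images. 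You flag this transfer as a ``secondary technical point,'' but in your formulation it is the entire content of the theorem: once $\widehat A_{n-2}\preceq\widehat A_n$ is granted, real-rootedness of the pencil for every $k$ is automatic and only elementary root location remains. The statement appears to be true (it checks out numerically for small $n$), but as written the proof has a hole exactly where the difficulty lives. A smaller inaccuracy: for even $n$ the quantity $\widehat A_n(-2)$ is not an evaluation of $A_n$ at $x=-1$ (that evaluation vanishes because $(1+x)\mid A_n$); you would need $A_n(x)/(1+x)$ at $x=-1$, giving the sequence $1,8,136,\dots$, and the inequality $\widehat A_n(-2)\ge 2\,\widehat A_{n-2}(-2)$ would have to be established separately for each parity.

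For contrast, the paper never symmetrizes. It expands $A_n(x)=\sum_{j=0}^{n-2}\big((n-j-1)x+j+1\big)A_{n-1,j}(x)$ over the refined Eulerian polynomials, absorbs $kxA_{n-2}(x)$ into the two extreme terms via $A_{n-1,0}=A_{n-2}$ and $A_{n-1,n-2}=xA_{n-2}$, and applies the Haglund--Ono--Wagner lemma to the pairwise interlacing sequence $\{A_{n-1,j}(x)\}_{j}$; the resulting determinant conditions produce the threshold $k\ge -n$. The Savage--Visontai lemma is thus used at the single level $n-1$, exactly where it applies, and no cross-$n$ interlacing is needed. Your route, if the missing interlacing were supplied, would prove something stronger than the paper does --- it would identify the sharp threshold $-\widehat A_n(-2)/\widehat A_{n-2}(-2)$, which matches the values $\Omega_n$ tabulated in Section 3 and bears on the paper's concluding conjecture. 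That is a good indication that the lemma you are deferring will not come for free.
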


The main objective of this paper is to give another proof of the above result by using the theory of $\s$-Eulerian polynomials recently developed by Savage and Visontai \cite{Savage2015s}. The classical Eulerian polynomials $A_n(x)$ are the
$\s$-Eulerian polynomials corresponding to the sequence $\s=(1,2,\ldots)$.
The $\s$-Eulerian polynomials have proven to be a powerful tool for studying the real-rootedness of Eulerian-like polynomials, see also Yang and Zhang \cite{YangMutual}. Instead of directly proving
Theorem \ref{branthm}, we shall prove a slightly general result as shown below.

\begin{thm}\label{main-thm}
For any $n > 3$ and  $k\ge -n$,
the polynomial $A_{n}(x)+ k x\,A_{n-2}(x)$ has only real zeros.
\end{thm}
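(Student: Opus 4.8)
The plan is to reduce the statement to a single interlacing relation together with a standard fact about common interlacers. Recall that two real-rooted polynomials with positive leading coefficients have the property that all of their nonnegative linear combinations are again real-rooted precisely when they admit a common interlacer, and I would use this as the engine of the proof. Writing the target polynomial as
\[
A_n(x) + k\,x A_{n-2}(x) = \bigl(A_n(x) - n\,x A_{n-2}(x)\bigr) + (k+n)\,x A_{n-2}(x),
\]
and noting that $k + n \ge 0$ in the allowed range, it suffices to prove that $P(x) := A_n(x) - n\,x A_{n-2}(x)$ and $Q(x) := x A_{n-2}(x)$ are real-rooted and possess a common interlacer. Since both have positive leading coefficients (here $\deg P = n-1$ and $\deg Q = n-2$), the combination $P + (k+n)Q$ is then real-rooted for every $k \ge -n$.

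My candidate for the common interlacer is $A_{n-1}(x)$. The easy half is to establish $A_{n-1}(x) \sep Q(x)$. This I would deduce from the classical interlacing chain $A_{n-2}(x) \sep A_{n-1}(x) \sep A_n(x)$, which the $\s$-Eulerian framework of Savage and Visontai supplies for the sequence $\s = (1,2,3,\ldots)$: the refined polynomials attached to $\s$ form an interlacing sequence, and the Eulerian polynomials inherit the pairwise interlacing. Given $A_{n-2} \sep A_{n-1}$, the roots of $A_{n-2}$ strictly separate the (negative) roots of $A_{n-1}$; multiplying $A_{n-2}$ by $x$ simply appends a root at $0$, which lies to the right of every root of $A_{n-1}$. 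Hence the roots of $A_{n-1}$ and of $x A_{n-2}$ still strictly alternate, giving $A_{n-1} \sep Q$.

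The crux is the remaining relation $A_{n-1}(x) \sep P(x)$, which in particular forces $P$ to be real-rooted. This is the step where the $\s$-Eulerian machinery is indispensable, and it is where I expect the main difficulty to lie: a common interlacer only controls nonnegative combinations, whereas $P$ carries the negative coefficient $-n$, so interlacing cannot be transported across the subtraction from the pieces $A_n$ and $x A_{n-2}$ taken separately. Instead I would use the interlacing-preserving recurrence (transfer-matrix) structure underlying the $\s$-Eulerian polynomials to produce, by induction on $n$, an identity exhibiting $A_n - n\,x A_{n-2}$ within a Savage--Visontai interlacing sequence built from $A_{n-1}$ and $A_{n-2}$. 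The constant $n$ should emerge precisely as the last modulus $s_n = n$ of the defining sequence, so that the recurrence coefficients place $A_{n-1} \sep \bigl(A_n - n\,x A_{n-2}\bigr)$ exactly on the boundary of what the transfer step preserves.

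With both relations in hand, $A_{n-1}(x)$ is a common interlacer of $P$ and $Q$, so every nonnegative combination $P + (k+n)Q$ is real-rooted, which is the assertion for $k \ge -n$. As a consistency check, the small cases $n = 4,5,6$ confirm that $A_n - n\,x A_{n-2}$ is real-rooted and interlaced by $A_{n-1}$, while the exact real-rootedness threshold in $k$ lies strictly below $-n$; this matches the expectation that the interlacing argument yields the clean but non-sharp bound $-n$ rather than the precise discriminant threshold.
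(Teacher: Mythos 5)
Your overall architecture is sound in principle: writing $K_n(x)=P(x)+(k+n)\,xA_{n-2}(x)$ with $P(x)=A_n(x)-n\,xA_{n-2}(x)$ and invoking the equivalence between common interlacers and real-rootedness of all nonnegative combinations is a legitimate reduction, and your verification that $A_{n-1}(x)\sep xA_{n-2}(x)$ (from the classical $A_{n-2}\sep A_{n-1}$ plus the appended root at $0$) is correct. But the proof has a genuine gap at exactly the point you flag as the crux: the relation $A_{n-1}(x)\sep A_n(x)-n\,xA_{n-2}(x)$ is asserted, not proved. This relation already contains the boundary case $k=-n$ of the theorem (it forces $P$ to be real-rooted), so nothing has been reduced; and the difficulty you correctly identify --- that a negative coefficient cannot be transported through a common-interlacer argument applied to $A_n$ and $xA_{n-2}$ separately --- is precisely the obstruction you do not overcome. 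The sentence proposing to ``produce, by induction on $n$, an identity exhibiting $A_n-n\,xA_{n-2}$ within a Savage--Visontai interlacing sequence,'' with the constant $n$ ``emerging as the last modulus,'' is a research plan, not an argument: no such identity is written down, no inductive hypothesis is formulated, and it is not explained which transfer step would place $A_{n-1}$ as an interlacer of the result.

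For comparison, the paper never isolates $P$ and never needs a single interlacing relation involving a negatively-weighted combination. It works with the refined polynomials $A_{n,i}(x)$ (permutations with $\sigma_n=n-i$), whose sequence is pairwise interlacing by Savage--Visontai, proves the identity $A_n(x)=\sum_{j=0}^{n-2}\bigl((n-j-1)x+j+1\bigr)A_{n-1,j}(x)$, and then absorbs $k\,xA_{n-2}(x)$ symmetrically into the two extreme terms via $A_{n-1,0}=A_{n-2}$ and $A_{n-1,n-2}=xA_{n-2}$, splitting it as $\tfrac{k}{2}xA_{n-1,0}+\tfrac{k}{2}A_{n-1,n-2}$. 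This exhibits $K_n=\sum_i(a_ix+b_i)f_i$ with $f_i=A_{n-1,i-1}$, and the Haglund--Ono--Wagner lemma applies because the only nontrivial minor conditions are $a_1b_2-b_1a_2=a_{m-1}b_m-b_{m-1}a_m=n+k\ge 0$ --- which is exactly where the hypothesis $k\ge -n$ enters. If you want to salvage your route, the most promising path is to extract the interlacing $A_{n-1}\sep P$ from that same decomposition at $k=-n$, but as written your proposal does not establish it.
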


The remainder of the paper is organized as follows.
In Section 2, we shall review some related concepts and results, and then give a proof of Theorem \ref{main-thm}.
In Section 3, we shall present one open problem.

\section{Proof of Theorem \ref{main-thm}}

The aim of this section is to prove Theorem \ref{main-thm}. The key ingredient of our proof
is the interlacing property of some refined Eulerian polynomials, due to Savage and Visontai \cite{Savage2015s}.

Let us first recall the definition of pairwise interlacing. Given two real-rooted polynomials $f(z)$ and $g(z)$ with positive leading coefficients,  we say that {$g(z)$ interlaces $f(z)$}, denoted $g(z)\sep f(z)$, if
\begin{align*}
\cdots\le s_2\le r_2\le s_1\le r_1,
\end{align*}
where $\{r_i\}$ and $\{s_j\}$ are the sets of zeros
of $f(z)$ and $g(z)$, respectively.
We say that a sequence of real polynomials $(f_{1}(x),\dots,f_{m}(x))$
with positive leading coefficients is pairwise interlacing if $f_{i}(x) \sep f_{j}(x)$ for all $1\le i<j\le m$.
For more information on the theory of pairwise interlacing, see Fisk \cite[Chapter 3]{FiskPolynomials} and Br{\"a}nd{\'e}n \cite[Section 7.8]{Braenden2015Unimodality}.

During their study of the real-rootedness of $\s$-Eulerian polynomials,
Savage and Visontai \cite{Savage2015s} introduced certain refinement of these polynomials
and obtained the pairwise interlacing property of the refined $\s$-Eulerian polynomials.
For the purpose here, we need the following refinement of the Eulerian polynomials $A_n(x)$:
\begin{align}\label{eq-refine-Eulerian}
A_{n,i}(x)=\sum_{{\sigma \in \mathfrak{S}_n} \atop {\sigma_{n}=n-i}} x^{\des\, \sigma}, \quad
\mbox{for } 0\le i \le n-1.
\end{align}
It is clear that $$A_{n}(x) = \sum_{i=0}^{n-1}A_{n,i}(x).$$

We would like to mention that $A_{n,i}(x)$ are just those refined $\s$-Eulerian polynomials considered
by Savage and Visontai, which correspond to the case of $\s=(1,2,\dots)$.
(For this correspondence, see {\cite[Section 3.1]{Savage2015s}}.)
Consequently, we have the following result.

\begin{lem} [{\cite[Lemma 2.1]{Savage2015s}}]
\label{lem:A_pairwise_interlacing}
 For $n\geq2$ and $i=0,1,\ldots,n-1,$
\begin{align}\label{eq:E_n,i}
A_{n,i}(x)=x\sum_{j=0}^{i-1}A_{n-1,j}(x)+\sum_{j=i}^{n-2}A_{n-1,j}(x)
\end{align}
with the initial condition $A_{1,0}(x)=1$.
Furthermore, the polynomial sequence
$\{A_{n,i}(x)\}_{i=0}^{n-1}$ is pairwise interlacing.
\end{lem}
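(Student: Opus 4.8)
The plan is to prove the two assertions separately: first establish the recurrence \eqref{eq:E_n,i} by a bijective argument on permutations, and then deduce the pairwise interlacing by induction on $n$, feeding the recurrence into a general interlacing-preservation principle.

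For the recurrence, I would use the standardization map obtained by deleting the last letter. Fix $i$ and take $\sigma \in \mathfrak{S}_n$ with $\sigma_n = n-i$. The word $\sigma_1\cdots\sigma_{n-1}$ is a permutation of $[n]\setminus\{n-i\}$; replacing each letter larger than $n-i$ by that letter minus one yields a permutation $\tau = \tau_1\cdots\tau_{n-1} \in \mathfrak{S}_{n-1}$, and this is a bijection between $\{\sigma \in \mathfrak{S}_n : \sigma_n = n-i\}$ and $\mathfrak{S}_{n-1}$ (the inverse re-inflates the values $\ge n-i$ and appends $n-i$). Standardization preserves the relative order of the first $n-1$ letters, so the descents among positions $1,\ldots,n-2$ are unchanged; the only new contribution is the descent at position $n-1$, which occurs exactly when $\sigma_{n-1} > \sigma_n = n-i$. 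Writing $\tau_{n-1} = (n-1)-j$, one checks that $\sigma_{n-1} > n-i$ holds iff $\tau_{n-1} \ge n-i$, i.e.\ iff $j \le i-1$. Hence $\des\sigma = \des\tau + 1$ when $j \le i-1$ and $\des\sigma = \des\tau$ when $j \ge i$; grouping the $\tau$'s by their last value $(n-1)-j$ and summing $x^{\des\sigma}$ over each fibre then produces exactly $x\sum_{j=0}^{i-1}A_{n-1,j}(x) + \sum_{j=i}^{n-2}A_{n-1,j}(x)$. The initial condition $A_{1,0}(x)=1$ is immediate.

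For the interlacing, I would argue by induction on $n$, the base case $n=2$ being $(A_{2,0},A_{2,1}) = (1,x)$, for which $1 \sep x$ holds. For the inductive step it suffices to prove the following transfer statement: if $(f_0,\ldots,f_m)$ is pairwise interlacing with nonnegative coefficients (so all roots are $\le 0$), then the polynomials $g_i = x\sum_{j=0}^{i-1} f_j + \sum_{j=i}^{m} f_j$, $0 \le i \le m+1$, are pairwise interlacing; applying this with $f_j = A_{n-1,j}$ gives the claim for the $A_{n,i}$. The basic tools are standard (Fisk, Br\"and\'en): a common lower interlacer of several polynomials interlaces any nonnegative combination of them; dually, if several polynomials are each interlaced by a common polynomial, their nonnegative combination is interlaced by it; and $f \sep xf$ whenever the roots of $f$ are nonpositive. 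Using these, with $L_i = \sum_{j<i}f_j$ and $U_i = \sum_{j\ge i}f_j$, one first shows $L_i \sep U_i$ (every summand of $L_i$ interlaces every summand of $U_i$), and then $L_i \sep g_i = xL_i + U_i$, so that each $g_i$ is real-rooted.

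The main obstacle is upgrading these auxiliary relations to the full pairwise interlacing $g_a \sep g_b$ for $a<b$. Splitting $g_a = xL_a + M + U_b$ and $g_b = xL_a + xM + U_b$ with $M = \sum_{a\le j<b} f_j$, the naive attempt to exhibit one common interlacer stalls: comparing the $x$-weighted lower block $xL_a$ with the unweighted middle block $M$ becomes ambiguous once a root at $0$ is introduced, and in any case a common interlacer only certifies that \emph{nonnegative} combinations are real-rooted, which is strictly weaker than interlacing. The clean route is to invoke the interlacing-preservation principle underlying the $\s$-Eulerian theory: a nonnegative linear transformation whose coefficient pattern is suitably monotone --- here the triangular ``$x$ on the low indices, $1$ on the high indices'' pattern of \eqref{eq:E_n,i} --- sends pairwise-interlacing sequences to pairwise-interlacing sequences, equivalently one verifies that \emph{every} real combination $\lambda g_a + \mu g_b$ is real-rooted. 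This is precisely the technical core supplied by Savage and Visontai, and it is where I expect the genuine work to lie.
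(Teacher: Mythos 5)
The paper does not actually prove this lemma: it is imported verbatim as \cite[Lemma 2.1]{Savage2015s}, so there is no internal proof to compare against, and your attempt should be judged as a reconstruction of the Savage--Visontai argument. Your first half succeeds: the deletion-and-standardization bijection is correct, the key observation that $\sigma_{n-1}>\sigma_n=n-i$ holds iff the standardized last letter $\tau_{n-1}=(n-1)-j$ satisfies $j\le i-1$ is verified correctly, and grouping the fibres by $j$ does yield \eqref{eq:E_n,i}. (A quick sanity check at $n=3$ confirms $A_{3,0}=1+x$, $A_{3,1}=2x$, $A_{3,2}=x+x^2$.)

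The second half, however, stops exactly at the statement that needs proving. Your transfer claim --- that $g_i=x\sum_{j<i}f_j+\sum_{j\ge i}f_j$, $0\le i\le m+1$, is pairwise interlacing whenever $(f_0,\dots,f_m)$ is --- is indeed the inductive engine, and your diagnosis of why the naive common-interlacer argument stalls is accurate: a common interlacer only certifies real-rootedness of nonnegative combinations of $g_a$ and $g_b$, not $g_a\sep g_b$. But you then declare this ``the technical core supplied by Savage and Visontai'' and invoke it, which as a blind proof is circular: the lemma you are asked to prove \emph{is} that core. To close the gap one argues as Savage and Visontai do: call $f,g$ \emph{compatible} if $af+bg$ is real-rooted for all $a,b\ge0$, and prove by a direct case analysis on the sign patterns that if for all $a<b$ both $(f_a,f_b)$ and $(xf_a,f_b)$ are compatible pairs, then the same two conditions hold for every pair $(g_a,g_b)$; one then converts back and forth between this two-condition compatibility and the interlacing relation $f_a\sep f_b$ via the Hermite--Biehler/Obreschkoff characterization (equivalently, \cite[Corollary 7.8.6]{Braenden2015Unimodality}). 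Without that step your proposal establishes the recurrence but only restates, rather than proves, the interlacing assertion.
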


As remarked in \cite{Savage2015s}, the polynomials $A_{n,i}(x)$ and their recurrence relation  also
appeared in \cite{Nevo2011} in a slightly different form. With the aid of this recurrence, we can
give an expression of $A_{n}(x)$ in terms of $A_{n-1,j}(x)$, which plays an important role in our proof of
Theorem \ref{main-thm}.

\begin{lem} For any integer $n\geq 2$, we have
\begin{align}\label{eq:S}
A_{n}(x)= & \sum_{j=0}^{n-2}\big( (n-j-1)x+j+1\big)A_{n-1,j}(x).
\end{align}
\end{lem}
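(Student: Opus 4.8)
The plan is to combine the two ingredients already at hand: the decomposition $A_{n}(x)=\sum_{i=0}^{n-1}A_{n,i}(x)$ recorded just after \eqref{eq-refine-Eulerian}, and the recurrence \eqref{eq:E_n,i} furnished by Lemma \ref{lem:A_pairwise_interlacing}. Substituting \eqref{eq:E_n,i} into the decomposition rewrites $A_{n}(x)$ as a double sum indexed by $i$ (from the outer decomposition) and $j$ (from the recurrence). The whole task then reduces to interchanging the order of summation and collecting, for each fixed $j\in\{0,1,\ldots,n-2\}$, the total coefficient multiplying $A_{n-1,j}(x)$.

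First I would write
$$A_{n}(x)=\sum_{i=0}^{n-1}\left(x\sum_{j=0}^{i-1}A_{n-1,j}(x)+\sum_{j=i}^{n-2}A_{n-1,j}(x)\right),$$
which naturally separates into a part carrying a factor of $x$ and a part without one. For the $x$-part, the constraint $0\le j\le i-1$ together with $0\le i\le n-1$ becomes, upon fixing $j$ and summing over $i$, the range $j+1\le i\le n-1$; hence $A_{n-1,j}(x)$ is counted exactly $n-1-j$ times. For the remaining part, the constraint $i\le j\le n-2$ with $i\ge 0$ becomes $0\le i\le j$ for fixed $j$, so $A_{n-1,j}(x)$ is counted $j+1$ times.

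Adding the two contributions yields the coefficient $(n-j-1)x+(j+1)$ in front of $A_{n-1,j}(x)$, which is precisely the claimed identity \eqref{eq:S}; as a sanity check, the case $n=2$ recovers $A_{2}(x)=(x+1)A_{1,0}(x)=1+x$, consistent with the initial condition. I do not anticipate any genuine obstacle in this argument, since it is a purely formal manipulation. The only step demanding care is the bookkeeping when the order of summation is swapped, in particular keeping $i$ confined to $[0,n-1]$ while $j$ runs over $[0,n-2]$, so that the index ranges $j+1\le i\le n-1$ and $0\le i\le j$ are read off correctly and the two tallies $n-1-j$ and $j+1$ are not mismatched.
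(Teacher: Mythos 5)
Your proposal is correct and is essentially the paper's own argument: the paper packages the same computation as a matrix identity, multiplying the all-ones row vector by the $0/1/x$ transition matrix of the recurrence, and the resulting column sums $(n-j-1)x+(j+1)$ are exactly your tallies of $n-1-j$ entries carrying $x$ and $j+1$ entries equal to $1$. Your interchange of summation is just the entrywise version of that matrix multiplication, so there is nothing substantively different.
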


\begin{proof} The recurrence relation (\ref{eq:E_n,i}) can be expressed in a matrix form as follows:
\begin{align}\label{A-recurrence-matrix}
\left(\begin{array}{c}
  A_{n,0}(x) \\
  A_{n,1}(x) \\
  A_{n,2}(x) \\
  \vdots \\
  A_{n,n-1}(x) \\
\end{array}\right)
 =
\left(\begin{array}{cccc}
  1 & 1 & \cdots & 1 \\
  x & 1 & \cdots & 1 \\
  x & x & \cdots & 1 \\
  \vdots & \vdots &   & \vdots \\
  x & x & \cdots & x \\
\end{array}\right)
% _{n\times(n-1)}
 \cdot
\left(\begin{array}{c}
  A_{n-1,0}(x) \\
  A_{n-1,1}(x) \\
  A_{n-1,2}(x) \\
  \vdots \\
  A_{n-1,n-2}(x) \\
\end{array}\right).
\end{align}

It is readily to see that 
\begin{align*}
A_{n}(x)=&\sum_{i=0}^{n-1}A_{n,i}(x)= \left(1,1,\ldots,1\right)
 \cdot
\left(\begin{array}{c}
  A_{n,0}(x) \\
  A_{n,1}(x) \\
  A_{n,2}(x) \\
  \vdots \\
  A_{n,n-1}(x) \\
\end{array}\right)\\[6pt]
= & \left(1,1,\ldots,1\right) \cdot \left(\begin{array}{cccc}
  1 & 1 & \cdots & 1 \\
  x & 1 & \cdots & 1 \\
  x & x & \cdots & 1 \\
  \vdots & \vdots &   & \vdots \\
  x & x & \cdots & x \\
\end{array}\right)
% _{n\times(n-1)}
 \cdot
\left(\begin{array}{c}
  A_{n-1,0}(x) \\
  A_{n-1,1}(x) \\
  A_{n-1,2}(x) \\
  \vdots \\
  A_{n-1,n-2}(x) \\
\end{array}\right)\\[6pt]
= & \big((n-1)x+1,(n-2)x+2,\ldots,x+(n-1)\big)
 \cdot
\left(\begin{array}{c}
  A_{n-1,0}(x) \\
  A_{n-1,1}(x) \\
  A_{n-1,2}(x) \\
  \vdots \\
  A_{n-1,n-2}(x) \\
\end{array}\right)\\[6pt]
= & \sum_{j=0}^{n-2}\big( (n-j-1)x+j+1\big)A_{n-1,j}(x),
\end{align*}
as desired.  This completes the proof.
\end{proof}

To prove Theorem \ref{main-thm}, we also need the following result due to Haglund, Ono, and Wagner \cite{Haglund1999Theorems}.

\begin{thm}[{\cite[Lemma~8]{Haglund1999Theorems}}]\label{Haglund1999}
Let $f_1(x), \dots, f_m(x)$ be real-rooted polynomials with nonnegative coefficients, and let  $a_1, \dots, a_m \ge 0$  and $b_1, \dots, b_m \ge 0$ be such that
$a_{i}b_{i+1} \ge b_{i}a_{i+1}$ for all $1 \le i \le m-1.$ If the sequence $(f_1(x), \dots, f_m(x))$ is pairwise interlacing, then
$$\sum_{i=1}^{m}a_{i}f_{i}(x) \sep \sum_{i=1}^{m}b_{i}f_{i}(x).$$
\end{thm}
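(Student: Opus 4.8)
The plan is to reduce the interlacing assertion $\sum_{i=1}^m a_i f_i \sep \sum_{i=1}^m b_i f_i$ to a single sign condition on a Wronskian and then to evaluate that Wronskian termwise. Write $P(x)=\sum_{i=1}^m a_i f_i(x)$ and $Q(x)=\sum_{i=1}^m b_i f_i(x)$, and set $W_{ij}(x)=f_i'(x)f_j(x)-f_i(x)f_j'(x)$. I would rely on the standard criterion from the theory of interlacing polynomials (see Fisk \cite[Chapter 3]{FiskPolynomials} and Br\"and\'en \cite[Section 7.8]{Braenden2015Unimodality}): if $P$ and $Q$ are real-rooted with positive leading coefficients and degrees differing by at most one, then $P\sep Q$ holds if and only if $Q'P-QP'\ge0$ for every real $x$, the reverse sign giving $Q\sep P$ and the identically-zero case occurring only when $P$ and $Q$ are proportional.

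First I would verify the hypotheses of this criterion. Since $(f_1,\dots,f_m)$ is pairwise interlacing, every nonnegative linear combination of the $f_i$ is real-rooted, so $P$ and $Q$ are real-rooted; as the $f_i$ have positive leading coefficients and $a_i,b_i\ge0$, the leading coefficients of $P,Q$ are positive. Moreover, pairwise interlacing forces all the $f_i$ to have degrees within one of each other (already $f_1\sep f_m$ requires $\deg f_m-\deg f_1\le1$), so $\deg P$ and $\deg Q$ differ by at most one and the criterion is applicable.

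Next comes the computation. Expanding and using the antisymmetry $W_{ji}=-W_{ij}$ gives
\begin{align*}
Q'P-QP'=\sum_{i,j}b_i a_j\,W_{ij}=\sum_{1\le i<j\le m}\big(b_i a_j-b_j a_i\big)\,W_{ij}.
\end{align*}
For each pair $i<j$ the factor $W_{ij}$ is nonpositive: indeed $f_i\sep f_j$ is exactly the statement $f_j'f_i-f_jf_i'\ge0$, i.e. $W_{ij}=-(f_j'f_i-f_jf_i')\le0$. If one also knows $b_i a_j-b_j a_i\le0$, equivalently $a_i b_j\ge a_j b_i$, for all $i<j$, then every summand is a product of two nonpositive quantities, so $Q'P-QP'\ge0$ identically and hence $P\sep Q$, as required.

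The main obstacle is precisely the passage from the stated adjacent hypothesis $a_i b_{i+1}\ge b_i a_{i+1}$ to the all-pairs inequalities $a_i b_j\ge a_j b_i$ for every $i<j$ used above. When the $a_i$ are positive this is immediate: the adjacent inequalities say that the ratios $b_i/a_i$ are nondecreasing, and monotonicity of these ratios yields $a_i b_j\ge a_j b_i$ for all $i<j$ simultaneously. The delicate point is the general nonnegative case in which some $a_i$ may vanish, for there the individual coefficients $b_i a_j-b_j a_i$ need not all carry the right sign, and one must instead exploit the Pl\"ucker-type identity $f_i W_{jk}-f_j W_{ik}+f_k W_{ij}=0$ among the pairwise Wronskians to regroup the sum, or argue by induction on $m$ with the two-term case $a_1 f_1+a_2 f_2\sep b_1 f_1+b_2 f_2$ (which reduces to $a_1 b_2-a_2 b_1\ge0$) as the base. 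I expect this bookkeeping, together with the degenerate degree and zero-coefficient cases, to be where the real care is required; the Wronskian identity itself is routine.
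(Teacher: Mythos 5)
Your Wronskian reduction is the standard direct route and the computation in it is sound; note first that the paper itself does not prove this lemma at all --- it quotes it from Haglund--Ono--Wagner and only remarks that it is the special case of Br\"and\'en's Corollary 7.8.6 in which every minor of the $2\times m$ matrix $G=\bigl(\begin{smallmatrix}a_1 & \cdots & a_m\\ b_1 & \cdots & b_m\end{smallmatrix}\bigr)$ is nonnegative. Those minors are exactly the all-pairs quantities $a_ib_j-a_jb_i$ ($i<j$), so the step you flagged as delicate --- passing from the adjacent hypothesis $a_ib_{i+1}\ge b_ia_{i+1}$ to $a_ib_j\ge a_jb_i$ for all $i<j$ --- is precisely the content of the paper's remark, and your instinct that this is where the real work lies is right.

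The genuine problem is that this gap cannot be closed as the statement is literally written, so your fallback suggestions (the Pl\"ucker-type regrouping, or a general induction) are doomed: with a zero column the conclusion itself is false. Take $m=3$, $f_1(x)=x+1$, $f_2(x)=x+\frac{1}{2}$, $f_3(x)=x$ (real-rooted, nonnegative coefficients, pairwise interlacing), and $a=(0,0,1)$, $b=(1,0,0)$. Both adjacent conditions hold (with equality, all products being $0$), yet the conclusion would assert $x \sep x+1$, i.e.\ $0\le -1$. Thus the lemma implicitly requires a nondegeneracy hypothesis such as: no index $i$ has $a_i=b_i=0$. (This costs nothing in the paper's application, where $n>3$ and $k\ge -n$ make every $a_i$ and $b_i$ strictly positive.) Under that hypothesis the completion is short and elementary, and you essentially had it: if $a_i=0$ then $b_i>0$, and $a_ib_{i+1}\ge b_ia_{i+1}$ forces $a_{i+1}=0$, so the zero set of $(a_i)$ is a final segment; symmetrically, the zero set of $(b_i)$ is an initial segment. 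Now fix $i<j$: if $a_j=0$ or $b_i=0$ the minor $a_ib_j-a_jb_i$ is trivially nonnegative; otherwise $a_l>0$ and $b_l>0$ for all $i\le l\le j$, and your monotone-ratio argument applies on that block. With the all-pairs inequalities in hand, your termwise sign analysis of $\sum_{i<j}(b_ia_j-b_ja_i)W_{ij}$ finishes the proof; no Pl\"ucker identity is needed, and the degree bookkeeping you did for the Wronskian criterion is adequate.
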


We would like to point out that the above result can be taken as a special case of \cite[Corollary 7.8.6]{Braenden2015Unimodality}, and the corresponding matrix is 
$$
G=\begin{pmatrix}
a_1 & a_2 & \cdots & a_m\\
b_1 & b_2 & \cdots & b_m
\end{pmatrix}.
$$
In fact, the conditions $a_i\geq 0, b_i\geq 0, a_{i}b_{i+1} \ge b_{i}a_{i+1}$ imply that all minors of $G$ are nonnegative.

Now we are in the position to prove the main result of this paper.

\noindent\textit{Proof of Theorem \ref{main-thm}.}
By \eqref{eq:S}, we have
\begin{align*}
K_n(x)&=A_{n}(x)+ k  x\,A_{n-2}(x)\\[5pt]
& =\sum_{j=0}^{n-2}\big((n-j-1)x+j+1\big)A_{n-1,j}(x)+ k  x\,  A_{n-2}(x)\\[5pt]
& =\big((n-1)x+1\big)A_{n-1,0}(x) +\sum_{j=1}^{n-3}\big((n-j-1)x+j+1\big)A_{n-1,j}(x)\\[5pt]
& \quad + \big(x+n-1\big)A_{n-1,n-2}(x)+ k  x\,  A_{n-2}(x).
\end{align*}

Note that
\begin{align*}
 A_{n-1,0}(x) = A_{n-2}(x) \quad \mbox{and} \quad A_{n-1,n-2}(x) = xA_{n-2}(x).
\end{align*}
By the definition of the refined Eulerian polynomials given by \eqref{eq-refine-Eulerian}, these two identities can be
easily derived from the following facts: given a permutation $\sigma=(\sigma_1,\sigma_2, \ldots, \sigma_{n-1}) \in \mathfrak{S}_{n-1}$,
if $\sigma_{n-1} = n-1$, then $n-2$ can not be a descent of $\sigma$; while if $\sigma_{n-1} = 1$, then $n-2$ must be a descent
of $\sigma$. With these identities, we get that
\begin{align*}
K_n(x)
=&  \big((n+\frac{k}{2}-1)x+1\big)A_{n-1,0}(x) \\
& + \sum_{j=1}^{n-3}\big((n-j-1)x+j+1\big)A_{n-1,j}(x)\\
& + \big(x+n+\frac{k}{2}-1\big)A_{n-1,n-2}(x).
\end{align*}

Now we shall use Theorem \ref{Haglund1999} to obtain the real-rootedness of $K_n(x)$. To this end,
let $m=n-1$, $f_i(x)=A_{n-1,i-1}(x)$ for $1\le i\le m$ and
$$
a_i=\left\{
\begin{array}{ll}
n+\frac{k}{2}-1, & i=1,\\
n-i, & 2\leq i\leq m-1,\\
1, & i=m,
\end{array}
\right.
$$
and
$$
b_i=\left\{
\begin{array}{ll}
1, & i=1,\\
i, & 2\leq i\leq m-1,\\
n+\frac{k}{2}-1, & i=m.
\end{array}
\right.
$$
By Lemma \ref{lem:A_pairwise_interlacing}, we see that
the polynomial sequence $\{f_i(x)\}_{i=1}^{m}$ is pairwise interlacing.
Moreover, the numbers $a_i$ and $b_i$ satisfy the conditions of Theorem \ref{Haglund1999}, since, by our hypothesis,
\begin{align*}
 & a_{1}b_{2}-b_{1}a_{2}  = (n+\frac{k}{2}-1)2-(n-2)=n+k\geq 0, \\
 & a_{m-1}b_{m}-b_{m-1}a_{m}  = 2(n+\frac{k}{2}-1)-(n-2)=n+k\geq 0,
\end{align*}
and, for $2\le i \le m-2$,
\begin{align*}
 a_{i}b_{i+1}-b_{i}a_{i+1}  = (n-i)(i+1)-i(n-i-1)=n>0.
\end{align*}
Therefore, we have
$$\sum_{i=1}^{m}a_{i}f_{i}(x) \sep \sum_{i=1}^{m}b_{i}f_{i}(x).$$
Since all the zeros of these two polynomials are real and nonpositive, we get that
$$\sum_{i=1}^{m}b_{i}f_{i}(x)\sep x \sum_{i=1}^{m}a_{i}f_{i}(x).$$
Thus, the polynomial $$K_n(x) = \sum_{i=1}^{m}\left(a_{i}x+ b_{i}\right)f_{i}(x)$$
has only real zeros, as the sum of interlacing polynomials must be real-rooted.
This completes the proof.
\qed

\section{One open problem}

We have shown that, for any $n > 3$ and  $k\ge -n$,
the polynomial $K_{n}(x)$ in \eqref{eq-main} has only real zeros.
Stanley \cite{StanleyPrivate} advised us to further study under what conditions the polynomial $K_n(x)$ has only real zeros. Inspired by his suggestion, we first considered when the polynomial $K_n(x)$ has all its zeros both distinct and real.

Let us first recall a useful criterion for determining whether a polynomial of degree $n$ has $n$ distinct real zeros.
Suppose that
$$f(x)=\sum_{i=0}^{n}a_{n-i}x^{i}$$
and
$$g(x)=\sum_{i=0}^{n}b_{n-i}x^{i}$$
are two polynomials with $a_0\ne 0$.
For any $1\leq k\leq n$, let
\begin{align*}
  \Delta_{2 k}\left( f(x),g(x) \right)= \det \left(
  \begin{array}{ccccc}
   a_0 & a_1 & a_2 & \dots & a_{2k-1}\\
   b_0 & b_1 & b_2 & \dots & b_{2k-1}\\
    0  & a_0 & a_1 & \dots & a_{2k-2}\\
    0  & b_0 & b_1 & \dots & b_{2k-2}\\
  \vdots&\vdots&\vdots&  & \vdots\\
    0  &  0  &  0  & \dots & b_{k}\\
  \end{array}\right)_{2k \times 2k}.
\end{align*}
These determinants are known as the Hurwitz determinants of $f(x)$ and $g(x)$.
The following result, essentially due to Borchardt and  Hermite \cite[pp. 349]{Rahman2002Analytic}, shows that we can determine whether
$f(x)$ has only real and distinct zeros by inspecting the signs of $\Delta_{2k}(f(x),f'(x))$.

\begin{thm}\label{Hurwitz criterion}
Suppose that $f(x)$ is a real polynomial of degree $n$ with $a_0>0$. Then
$f(x)$ has $n$ distinct real zeros  if and only if the corresponding Hurwitz determinants satisfy
\begin{align}\label{Hurwitz_det}
\Delta_{2k}(f(x),f'(x))>0, \quad \text{for every } 1\leq k \leq n.
\end{align}
\end{thm}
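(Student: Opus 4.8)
The plan is to deduce this classical criterion from Hermite's theorem on the number of real roots of a polynomial, combined with Sylvester's criterion for positive definiteness, and to close the gap between the two determinantal representations by the standard identity relating Hurwitz determinants to Hankel determinants of Markov parameters.

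First I would bring in Newton's power sums $s_m=\sum_{j=1}^{n}\alpha_j^m$, where $\alpha_1,\dots,\alpha_n$ are the complex zeros of $f(x)$, together with the expansion at infinity $f'(x)/f(x)=\sum_{m\ge 0}s_m x^{-m-1}$. Hermite's theorem states that the rank of the $n\times n$ Hankel matrix $H=(s_{i+j})_{0\le i,j\le n-1}$ equals the number of \emph{distinct} zeros of $f(x)$, while its signature equals the number of distinct \emph{real} zeros. Hence $f(x)$ has $n$ distinct real zeros if and only if $H$ has full rank $n$ and signature $n$, i.e.\ if and only if the real symmetric matrix $H$ is positive definite. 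Note this is automatically consistent at the first step, since $s_0=n>0$.

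Next I would invoke Sylvester's criterion: $H$ is positive definite precisely when all its leading principal minors $D_k=\det(s_{i+j})_{0\le i,j\le k-1}$ satisfy $D_k>0$ for $1\le k\le n$. This reduces the theorem to proving that the positivity of the Hankel minors $D_k$ is equivalent to the positivity of the Hurwitz determinants $\Delta_{2k}(f(x),f'(x))$ appearing in the statement.

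The heart of the argument, and the step I expect to be the main obstacle, is this bridge. Because the power sums $(s_m)_{m\ge 0}$ are exactly the Markov parameters of the rational function $f'(x)/f(x)$, the classical correspondence between the Hurwitz matrix of a pair $(f,g)$ and the Hankel matrix of the Taylor coefficients of $g/f$ at infinity yields an identity of the form $\Delta_{2k}(f(x),f'(x))=\gamma_k\, a_0^{\,c_k}\, D_k$, with $\gamma_k>0$, $c_k$ a nonnegative integer, and $a_0>0$ the leading coefficient (one checks directly, for instance, that $\Delta_2(f,f')=a_0 b_1=n a_0^{2}=a_0^{2}D_1$). I would establish this identity by explicit row and column operations transforming the interleaved Sylvester-type arrangement defining $\Delta_{2k}$ into the Hankel arrangement of the $s_m$, or equivalently by citing the Routh--Hurwitz/Hankel identity as in Gantmacher's treatment of the Routh--Hurwitz problem. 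Since $\gamma_k a_0^{c_k}>0$, the condition $\Delta_{2k}(f,f')>0$ for all $1\le k\le n$ is equivalent to $D_k>0$ for all $1\le k\le n$, which by the two previous steps is equivalent to $f(x)$ having $n$ distinct real zeros. The delicate point is to verify that the prefactors $\gamma_k a_0^{c_k}$ are genuinely positive under the given conventions (in particular, tracking the normalization of $f'$ in the $b$-coefficients and any sign $(-1)^{\binom{k}{2}}$ coming from reordering rows), since only then do the sign conditions on $\Delta_{2k}$ and on $D_k$ coincide; the magnitudes of these factors are irrelevant to the equivalence.
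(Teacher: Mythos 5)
The paper does not prove this statement at all: Theorem~\ref{Hurwitz criterion} is quoted as a classical result of Borchardt and Hermite, with a pointer to Rahman and Schmeisser's book, so there is no in-paper argument to compare yours against. Your proposal is, in effect, the standard classical proof of that cited result, and its logical skeleton is sound: Hermite's theorem (rank of the Hankel matrix $H=(s_{i+j})_{0\le i,j\le n-1}$ of power sums equals the number of distinct zeros, signature equals the number of distinct real zeros) gives that $f$ has $n$ distinct real zeros iff $H$ is positive definite; Sylvester's criterion converts this to $D_k=\det(s_{i+j})_{0\le i,j\le k-1}>0$ for $1\le k\le n$; and the Hurwitz--Hankel identity converts that to the stated sign conditions on $\Delta_{2k}(f,f')$. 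The one step you leave schematic is the bridge identity, and your worry about the prefactor is resolved cleanly: with the paper's conventions ($a_0$ the leading coefficient, $b_0=0$ and $b_1=na_0$ for $g=f'$, rows of $f$ and $g$ interleaved), the classical identity reads exactly $\Delta_{2k}(f,f')=a_0^{2k}D_k$, i.e.\ $\gamma_k=1$ and $c_k=2k$ with no residual sign $(-1)^{\binom{k}{2}}$; this is Gantmacher's formula relating the $\nabla_{2p}$ determinants of a pair $(f,g)$ with $\deg g<\deg f$ to the Hankel minors of the Markov parameters of $g/f$, applied with $g=f'$ and $f'/f=\sum_{m\ge0}s_mx^{-m-1}$. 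Your sanity check $\Delta_2=na_0^2=a_0^2D_1$ is consistent with this. To make the write-up self-contained you would still need to either carry out the row/column reduction proving that identity or give a precise citation, but there is no gap in the approach itself.
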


When $n$ is not too large, we can use the above characterization to find the range of admissible values of $k$ for which the polynomial $K_n(x)$ has all its zeros both real and distinct.
For small $n$, we found that there exist two real numbers $\omega_n$ and $\Omega_n$ such that the polynomial $K_n(x)$ has all its zeros both real and distinct if and only if  $k \in (-\infty,\omega_n) \cup (\Omega_n,+\infty)$.  In the following table, we list the values of $\omega_n$ and $\Omega_n$ for $3\leq n\leq 18$.

\begin{center}
\begin{tabular}{|r|c|c|}
\hline  $n$ & $\omega_n$ & $\Omega_n$\\  \hline
  $3$ & $-6$  & $-2$ \\
  $4$ & $-12$  & $-8$ \\
  $5$ & $-20$  & $-8$ \\
  $6$ & $-30$  & $-17$ \\
  $7$ & $-42$  & $-17$ \\
  $8$ & $-56$  & $-496/17$ \\
  $9$ & $-72$  & $-496/17$ \\
 $10$ & $-90$  & $-1382/31$ \\
 $11$ & $-110$  & $-1382/31$ \\
 $12$ & $-132$  & $-43688/691$ \\
 $13$ & $-156$  & $-43688/691$ \\
 $14$ & $-182$  & $-929569/10922$ \\
 $15$ & $-210$  & $-929569/10922$ \\
 $16$ & $-240$  & $-102473312/929569$ \\
 $17$ & $-272$  & $-102473312/929569$ \\
 $18$ & $-306$  & $-443861162/3202291$\\  \hline
\end{tabular}
\end{center}

For $3\leq n\leq 18$, we observed that $\omega_n=-n(n-1)$. Although $\Omega_n$ are rational numbers,
the consecutive product $\prod_{i=1}^n \Omega_{2i+1}$ turns out to be an integer number, which coincides with
the $(n+1)$-th tangent number up to a sign, see \cite[A000182]{SloaneLine}.
It is known that the $n$-th tangent number $T_n$ counts the number of up-down (or down-up) permutations of $[2n-1]$.
More computer evidence suggests the following conjecture.

\begin{conj}
For any $n\geq 3$, the polynomial $K_n(x)$ has only real and distinct zeros if and only if
$k \in (-\infty,-n(n-1)) \cup (-a(\floor{n/2}),+\infty)$, where $a(n) = T_{n+1}/T_{n}$.
\end{conj}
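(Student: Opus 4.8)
The plan is to exploit the palindromic symmetry of $K_n(x)$ and reduce the whole question to tracking a single real root of a half-degree polynomial as it crosses the interval $[-2,2]$. First I would record that $A_n(x)$, $xA_{n-2}(x)$, and hence $K_n(x)=A_n(x)+kxA_{n-2}(x)$ are all palindromic of degree $n-1$ (this is why Bóna's $W$-polynomials are symmetric), so that $\phi(x):=-A_n(x)/(xA_{n-2}(x))$ satisfies $\phi(1/x)=\phi(x)$ and is therefore a rational function $\psi(y)$ of the single variable $y=x+1/x$. The real zeros of $K_n$ are exactly the real $x$ with $\phi(x)=k$, and under $y=x+1/x$ a real $x\neq\pm1$ corresponds to a real $y$ with $|y|>2$, a conjugate pair of non-real $x$ to $|y|<2$, and $x=\pm1$ to $y=\pm2$. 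Thus $K_n$ has only real and distinct zeros if and only if every root of the reduced pencil $Q_k(y):=P_n(y)+kP_{n-2}(y)$ is real, simple, and satisfies $|y|>2$, where for odd $n$ one writes $A_n(x)=x^{(n-1)/2}P_n(y)$, while for even $n$ one first divides the forced factor $(1+x)$ out of $K_n$ and reduces the remaining even-degree palindrome. The guiding intuition is that the two thresholds correspond exactly to $K_n$ acquiring a \emph{double} real zero, at $x=1$ for one and at $x=-1$ for the other, a doubling that is automatic for palindromic polynomials at $x=\pm1$.

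The second step is to analyse $\psi(y)=-P_n(y)/P_{n-2}(y)$ as a function of $y$. I would establish the interlacing relation $P_{n-2}\prec P_n$ for the reduced Eulerian polynomials; this makes $Q_k$ real-rooted with simple zeros for \emph{every} real $k$ and forces the Wronskian $P_n'P_{n-2}-P_nP_{n-2}'$ to keep a constant sign, so that $\psi$ is strictly monotone on each pole-free interval. Because the poles of $\psi$ correspond to the negative zeros of $A_{n-2}$ different from $-1$, they all satisfy $y<-2$, so $\psi$ is pole-free and strictly decreasing on $(-2,\infty)$ and restricts to an order-reversing bijection of $[-2,2]$ onto $[\psi(2),\psi(-2)]$. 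Consequently $Q_k$ has a root in $[-2,2]$ precisely when $k$ lies in this image interval, and then exactly one such root: for $k$ outside, all roots are real, simple, and have $|y|>2$, yielding $n-1$ distinct real zeros of $K_n$; for $k$ strictly inside a root lands in $(-2,2)$, producing a non-real conjugate pair; and at the two endpoints a root sits at $y=\pm2$, i.e. a double zero of $K_n$ at $x=\pm1$.

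The third step is to identify the endpoints $\psi(\pm2)$ with $\omega_n$ and $\Omega_n$. Since $y=2\leftrightarrow x=1$, one gets $\psi(2)=\phi(1)=-A_n(1)/A_{n-2}(1)=-n!/(n-2)!=-n(n-1)=\omega_n$ from $A_n(1)=n!$. For $\psi(-2)$ I would use the classical evaluations of the Eulerian polynomials at $-1$: for odd $n=2m+1$ one has $A_n(-1)=(-1)^mT_{m+1}$, whence $\psi(-2)=A_n(-1)/A_{n-2}(-1)=-T_{m+1}/T_m=-a(\lfloor n/2\rfloor)=\Omega_n$; for even $n=2m$ both $A_n(-1)$ and $A_{n-2}(-1)$ vanish, and an l'H\^opital computation replaces them by the derivative values $A_{2m}'(-1)=(-1)^{m-1}T_{m+1}/2$, again giving $\psi(-2)=-T_{m+1}/T_m=-a(m)=\Omega_n$. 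Combining the three steps proves the claimed equivalence, and the monotonicity of $\psi$ simultaneously yields $\omega_n<\Omega_n$, so the excluded interval is genuinely nonempty.

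The main obstacle I anticipate is the second step, specifically proving the interlacing $P_{n-2}\prec P_n$ of the reduced Eulerian polynomials (equivalently, the constant sign of their Wronskian on $(-2,\infty)$). Unlike the sufficient bound $k\ge -n$ of Theorem \ref{main-thm}, which came cheaply from the pairwise interlacing of Lemma \ref{lem:A_pairwise_interlacing} and the Haglund--Ono--Wagner criterion, the \emph{sharp} thresholds require genuine control of the global root dynamics of the pencil: while $A_{n-1}\prec A_n$ is classical, interlacing of polynomials two indices apart \emph{after} the nonlinear substitution $y=x+1/x$ is not automatic and would need a dedicated argument, for instance a Sturm-sequence or Hermite--Biehler analysis of the triple $P_n,P_{n-1},P_{n-2}$, or a direct sign analysis of the Wronskian. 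A secondary technical point is the even-$n$ reduction, where one must divide out $(1+x)$, check that the resulting pencil still interlaces, and verify the identity $A_{2m}'(-1)=(-1)^{m-1}T_{m+1}/2$; this last identity follows from standard generating-function evaluations of the Eulerian polynomials near $x=-1$, which are precisely what manufactures the tangent numbers in $\Omega_n$.
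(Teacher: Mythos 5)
The statement you are proving is stated in the paper only as a \emph{conjecture}: the author supports it with the table of $\omega_n,\Omega_n$ for $3\le n\le 18$ computed via the Hurwitz-determinant criterion of Theorem 3.1, and offers no proof. So there is no argument in the paper to compare yours against; your attempt has to stand on its own.

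Your reduction is sound as far as it goes, and the parts you carry out are correct. $K_n(x)$ is indeed palindromic of degree $n-1$, the substitution $y=x+1/x$ converts the question into whether the pencil $Q_k(y)=P_n(y)+kP_{n-2}(y)$ has all roots real, simple, and of absolute value greater than $2$, and your endpoint evaluations check out: $\psi(2)=-A_n(1)/A_{n-2}(1)=-n(n-1)$, and $\psi(-2)=-T_{m+1}/T_m$ with $m=\floor{n/2}$, using $A_{2m+1}(-1)=(-1)^mT_{m+1}$ for odd $n$ and the derivative evaluation $A_{2m}'(-1)=(-1)^{m-1}T_{m+1}/2$ for even $n$ (I verified both identities on small cases, e.g.\ $A_4'(-1)=-8=-T_3/2$). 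This is a genuinely illuminating explanation of why the tangent numbers appear in $\Omega_n$.

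However, the proposal has a real gap exactly where you say it does, and it is not a minor one: everything hinges on the interlacing $P_{n-2}\prec P_n$ of the folded Eulerian polynomials (and on its analogue for the pencil obtained after dividing out $(1+x)$ when $n$ is even), which you assert but do not prove. Without it, the ``if'' direction fails because $Q_k$ could in principle acquire non-real or multiple roots for some $k$ outside $[\omega_n,\Omega_n]$, and the ``only if'' direction fails because the single-crossing picture --- exactly one root of $Q_k$, moving monotonically, entering $[-2,2]$ precisely for $k\in[\psi(2),\psi(-2)]$ --- is no longer guaranteed, so the bad set need not be a single interval. Note that this folded interlacing does not follow from the classical fact $A_{n-1}\prec A_n$ (the indices differ by two and the substitution $y=x+1/x$ is nonlinear), nor from Theorem 1.2 of the paper: the unfolded pencil $A_n(x)+kxA_{n-2}(x)$ is \emph{not} real-rooted for all $k$ (that is the whole content of the conjecture), so the Hermite--Kakeya--Obreschkoff theorem cannot be applied to it to deduce a common interlacer. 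Until you supply a proof of $P_{n-2}\prec P_n$ --- by a Wronskian sign analysis, a Sturm-type argument, or otherwise --- what you have is a correct and valuable reduction of the conjecture to a cleaner interlacing statement, not a proof; the conjecture remains open.
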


\vskip 3mm
\noindent {\bf Acknowledgments.} We wish to thank Professor Richard Stanley for his helpful comments.
We also would like to thank the referee for recommending various improvements of the manuscript.
This work was supported by the 973 Project, the PCSIRT Project of the Ministry of Education and the National Science Foundation of China.

% \bibliographystyle{abbrv}
% \bibliography{../../../../ref.bib}

\end{document}